\documentclass[11pt,reqno]{amsart}
\RequirePackage[OT1]{fontenc}
\usepackage{graphicx}
\usepackage{latexsym,color}
\usepackage{amssymb}
\usepackage{amsfonts}
\usepackage{amsmath}
\usepackage{bbm}

\usepackage{float}
\newtheorem{thm}{Theorem}[section]
 \newtheorem{dfn}{Definition}[section]
 \newtheorem{lem}{Lemma}[section]

\newtheorem{prop}{Proposition}[section]

\newenvironment{remark}[1][Remark]{\begin{trivlist}
\item[\hskip \labelsep {\bf Remark}]}{\end{trivlist}}

\numberwithin{equation}{section}

\renewcommand{\thefootnote}{\fnsymbol{footnote}}

\newcommand{\C}{\mathcal{C}}
\newcommand{\E}{\mathbb{E}}
\newcommand{\Pit}{\Pi^{\hat{\tau}}}
\newcommand{\St}{\mathcal{S}^{\hat{\tau}}}

\newcommand{\J}{\mathcal{J}}
\newcommand{\bP}{\mathbb{P}}

\newcommand{\ind}{\mathbbm{1}}

\newcommand{\eps}{\varepsilon}
\newcommand{\cF}{\mathcal{F}^{\hat{\tau}}}
\newcommand{\R}{\mathbb R}

\begin{document}
\title{Disorder Detection with Costly Observations } \thanks{E.B. is supported in part by the National Science Foundation.}\thanks{E.E. is supported by the Knut and Alice Wallenberg Foundation and by the Swedish Research Council.}
\author{Erhan Bayraktar}
\address[Erhan Bayraktar]{Department of Mathematics, University of Michigan, 530 Church Street, Ann Arbor, MI 48104, USA}
\email{erhan@umich.edu}
\author{Erik Ekstr{\"o}m}
\address[Erik Ekstr{\"o}m]{Department of Mathematics, Uppsala University, Box 480, 75106 Uppsala, }
\email{erik.ekstrom@math.uu.se}
\author{Jia Guo}
\address[Jia Guo]{Department of Mathematics, University of Michigan, 530 Church Street, Ann Arbor, MI 48104, USA}
\email{guojia@umich.edu}

\date{\today}

\maketitle

\renewcommand{\thefootnote}{\arabic{footnote}}

\begin{abstract}
We study the Wiener disorder detection problem where each observation is associated with a positive cost. In this setting, 
a strategy is a pair consisting of a sequence of observation times and a stopping time corresponding to the declaration of 
disorder. We characterize the minimal cost of the disorder problem with costly observations 
as the unique fixed-point of a certain jump operator, and we determine the optimal strategy. 
\end{abstract}

\section{Problem Formulation}

Let $(\Omega, \mathcal{F}, \mathbb{P}_{\pi})$ be a probability space hosting a Brownian motion $W$ and an independent random variable $\Theta$ having distribution
\[
\mathbb{P}_{\pi}\{\Theta=0\}=\pi, \quad \mathbb{P}_{\pi}\{\Theta>t\}=(1-\pi)e^{-\lambda t}, \quad t \geq 0,
\]
where $\pi\in[0,1]$.
We assume that the observation process $(X_t)_{t\geq 0}$ is given by
\begin{equation}
X_t=\alpha(t-\Theta)^++W_t,
\end{equation}
i.e. a Brownian motion which after the random (disorder) time $\Theta$ drifts at rate $\alpha$.
Our objective is to detect the unknown disorder time $\Theta$ based on the observations of $X_t$ as quickly after its occurrence as possible, but at the same time with a small proportion of false alarms. A classical Bayes' risk associated with a stopping strategy $\tau$ (where $\tau$ is a 
stopping time with respect to some appropriate filtration) is given by
\begin{equation}
\label{Bayes}
\mathbb P_\pi(\tau<\Theta) + c\E_\pi[(\tau-\Theta)^+],
\end{equation}
where $c>0$ is a cost associated to the detection delay.

In the classical version of the detection problem, see \cite{S}, observations of the underlying process are costless, and a solution can be obtained by 
making use of the associated formulation in terms of a free-boundary problem. Subsequent literature has, among different things, focused on the case of costly observations. In \cite{B} and \cite{DS}, a version of the problem was 
considered in which observations of increments of the underlying process are costly, and where the cost is proportional 
to the length of the observation time. An alternative set-up was considered in \cite{EBRK}, where the number of observations 
of the underlying process is limited. 

In the current article, we consider a model in which observations of $X$ are unrestricted, but where each observation
is associated with an observation cost $d>0$. We stress the fact that we assume that the controller observes values of 
the process $X$, as opposed to increments of $X$ as in \cite{B} and \cite{DS}. 

Due to the discrete cost of each observation, our observation strategies will consist of finitely many samples; this 
motivates the following definition. 

\begin{dfn}\label{optstr}
A strictly increasing sequence $\hat \tau=\left\{\tau_1,\tau_2,\cdots\right\}$ of random variables is said to belong to $\mathcal T$
if $\tau_1$ is positive and deterministic and if $\tau_j$ is measurable with respect to $\sigma(X_{\tau_1},\cdots,X_{\tau_{j-1}},\tau_1,\cdots,\tau_{j-1})$, $j=2,3,\cdots$. 
For a given sequence $\hat\tau\in\mathcal T$, let 
\[
\mathcal{F}^{\hat{\tau}}_{t}=\sigma(X_{\tau_1},\cdots, X_{\tau_{j}},\tau_1,\cdots,\tau_{j}; \mbox{ where } j=\sup\{k: \tau_k \leq t \}),
\]
let $\mathbb{F}^{\hat{\tau}}=(\mathcal{F}_t^{\hat{\tau}})_{t \geq 0}$, and 
denote by $\St$ the stopping times with respect to $\mathbb{F}^{\hat\tau}$.
\end{dfn}


A useful result regarding the structure of the stopping times is the following result which is presented as Proposition~2.1 in \cite{EBRK}.

\begin{lem}\label{lem:st}
Let $\hat\tau\in\mathcal T$, and let $S$ be an $\mathbb{F}^{\hat\tau}$-stopping time. Then for each $j\geq 1$, both  $S 1_{\{\tau_j \leq S <\tau_{j+1}\}}$ and $1_{\{\tau_j \leq S <\tau_{j+1}\}}$ are $\mathbb{F}^{\hat\tau}_{\tau_j}$-measurable.
\end{lem}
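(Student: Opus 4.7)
The plan is to exploit that the filtration $\mathbb{F}^{\hat\tau}$ is piecewise constant on the random intervals $[\tau_j,\tau_{j+1})$. Write $\mathcal{G}_j := \sigma(X_{\tau_1},\ldots,X_{\tau_j},\tau_1,\ldots,\tau_j)$. Two preliminary facts I would establish first: (i) each $\tau_j$ is an $\mathbb{F}^{\hat\tau}$-stopping time and $\mathcal{F}^{\hat\tau}_{\tau_j} = \mathcal{G}_j$; and (ii) for any deterministic $t \geq 0$ and any $B \in \mathcal{F}^{\hat\tau}_t$, the trace of $B$ on $\{\tau_k \leq t < \tau_{k+1}\}$ has the form $A_k \cap \{\tau_k \leq t < \tau_{k+1}\}$ for some $A_k \in \mathcal{G}_k$. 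Both are straightforward from the definition of $\mathcal{F}^{\hat\tau}_t$, and combined with Definition~\ref{optstr} they also give that $\tau_{j+1}$ is $\mathcal{G}_j$-measurable.

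The core of the argument is to show $\{S < \tau_{j+1}\} \in \mathcal{G}_j$. I would write
\[
\{S < \tau_{j+1}\} = \bigcup_{q \in \mathbb{Q}_+} \bigl(\{S \leq q\} \cap \{q < \tau_{j+1}\}\bigr).
\]
For each rational $q$, $\{S \leq q\} \in \mathcal{F}^{\hat\tau}_q$; decomposing along the partition $\{\tau_k \leq q < \tau_{k+1}\}_{k \geq 0}$ and applying fact (ii) expresses $\{S \leq q\} \cap \{q < \tau_{j+1}\}$ as a countable union of sets $A_k \cap \{\tau_k \leq q < \tau_{k+1}\}$ with $A_k \in \mathcal{G}_k$, and the constraint $q < \tau_{j+1}$ forces $k \leq j$, so each piece lies in $\mathcal{G}_j$. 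Since $\{q < \tau_{j+1}\} \in \mathcal{G}_j$ as well, the whole union is in $\mathcal{G}_j$. Running the same argument with $j$ replaced by $j-1$ gives $\{S < \tau_j\} \in \mathcal{G}_{j-1} \subseteq \mathcal{G}_j$. Taking set differences yields $\{\tau_j \leq S < \tau_{j+1}\} \in \mathcal{G}_j = \mathcal{F}^{\hat\tau}_{\tau_j}$, which settles the indicator claim.

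For the second claim I would check that $\{S \leq r\} \cap \{\tau_j \leq S < \tau_{j+1}\} \in \mathcal{G}_j$ for every rational $r \geq 0$. Written as $\{S \leq r\} \cap \{S < \tau_{j+1}\} \cap \{\tau_j \leq S\}$, the last two factors lie in $\mathcal{G}_j$ by what was just shown, and $\{S \leq r\} \cap \{S < \tau_{j+1}\}$ splits as $(\{S \leq r\} \cap \{r < \tau_{j+1}\}) \cup (\{S < \tau_{j+1}\} \cap \{\tau_{j+1} \leq r\})$, each factor being $\mathcal{G}_j$-measurable by the same reasoning. The main obstacle is purely bookkeeping: the index $\sup\{k : \tau_k \leq t\}$ in the definition of $\mathcal{F}^{\hat\tau}_t$ is itself random, which obliges one to decompose every event along the partition generated by the $\tau_k$'s. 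The clean observation that makes everything snap into place is that $\{q < \tau_{j+1}\}$ caps that running index at $j$, reducing the apparent continuous-time complication to finite-dimensional bookkeeping.
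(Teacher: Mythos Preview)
The paper does not give its own proof of this lemma; it simply records the statement and cites \cite{EBRK}, Proposition~2.1. So there is no in-paper argument to compare against.

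Your direct argument is correct and self-contained. The key structural observation you isolate---that intersecting with $\{q<\tau_{j+1}\}$ caps the running index $\sup\{k:\tau_k\le q\}$ at $j$, so that every trace $\{S\le q\}\cap\{\tau_k\le q<\tau_{k+1}\}$ with $k\le j$ already lies in $\mathcal G_j$---is exactly what drives the result, and your decomposition over rationals makes this rigorous. The preliminary identification $\mathcal F^{\hat\tau}_{\tau_j}=\mathcal G_j$ and the trace description (ii) are both routine consequences of the definition of $\mathcal F^{\hat\tau}_t$, as you say. One small remark: in the second claim you restrict to rational $r$, but in fact your argument for $\{S\le q\}\cap\{q<\tau_{j+1}\}\in\mathcal G_j$ nowhere uses rationality of $q$ (only that $\{S\le q\}\in\mathcal F^{\hat\tau}_q$), so the same reasoning gives $\{S\le r\}\cap\{\tau_j\le S<\tau_{j+1}\}\in\mathcal G_j$ for every real $r\ge 0$ directly, and the passage through rationals is not needed there.
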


We generalize the Bayes' risk defined in \eqref{Bayes} by formulating the quickest detection problem with observation costs as 
\begin{equation}\label{minimalrisk}
\begin{split}
V(\pi)&=\inf_{\hat{\tau}\in\mathcal{T}}\inf_{\tau\in \St } \left\{\mathbb P_{\pi}({\tau}<\Theta)+\mathbb E_{\pi}\left[c\, ({\tau}-\Theta)^++d\sum_{k=1}^{\infty}\mathbbm{1}_{\{\tau_k\leq \tau\}}\right]\right\}.
\end{split}
\end{equation}
Here the positive constant $c$ represents the cost of detection delay, and the positive constant $d$ represents the cost for 
each observation. 
Note that the observer has two controls: she controls the observation sequence $\hat{\tau}$, and also needs to decide when the change happened, which is the role of $\tau$. 
We should point out that we can extend our results to cover ``expected miss" criterion using the results of \cite{MR2065992}, by appropriately choosing the cost parameters. We can also consider the exponential delay penalty as in \cite{MR3077545} but in this case one needs to use additional sufficient statistics need to be used. This would change the nature of the problem, but we expect the structure of the solution to be qualitatively similar. For further discussion on different criteria and the derivation of sufficient statistics also see \cite{MR2158013}.

Problem~\eqref{minimalrisk} can be formulated as a control problem in terms of the a posteriori probability process
\begin{equation}
\Pit_t:=\mathbb P_{\pi}(\Theta\leq t\big |\cF_t)
\end{equation}
as
\begin{equation}\label{eq:valuefunc}
V(\pi)=\inf_{\hat{\tau}\in\mathcal{T}}\inf_{{\tau}\in\St}\rho^{\pi}(\hat{\tau},\tau),
\end{equation}
where
\[
\rho^{\pi}(\hat{\tau},\tau):=\E_\pi\left[1-\Pit_{\tau}+c \int_0^{\tau}\Pit_s ds+d\sum_{k=1}^{\infty}\mathbbm{1}_{\{\tau_k\leq \tau\}}\right]. 
\]
The computations are analogous to, e.g., \cite[Proposition 5.8]{MR2482527}.  Observe that we can restrict ourselves to stopping times with $\mathbb E[\tau]<\infty$.

\begin{remark}
Clearly, $V(\pi)\geq 0$. Moreover, choosing $\tau=0$ yields $V(\pi) \leq 1-\pi$.
\end{remark}

For $\pi=1$, the a posteriori probability process $\Pi^{\hat{\tau}}_{t}$ is constantly equal to 1.
If $\pi\in[0,1)$, then $\Pi^{\hat{\tau}}_{t}$ can (see \cite{EBRK} and \cite{D}) be expressed recursively as
\begin{equation}\label{eq:defn-pi}
 \Pit_t= \left\{
\begin{array}{ll}
\pi & t=0,\\
1-(1-\Pi^{\hat{\tau}}_{\tau_{k-1}})e^{-\lambda(t-\tau_{k-1})} & \tau_{k-1} < t< \tau_{k}, \\
\frac{j(\tau_{k}-\tau_{k-1},\Pi^{\hat{\tau}}_{\tau_{k-1}},X_{\tau_{k}}-X_{\tau_{k-1}})}{1+j(\tau_{k}-\tau_{k-1},\Pi^{\hat{\tau}}_{\tau_{k-1}},X_{\tau_k}-X_{\tau_{k-1}})} & t=\tau_{k},    
     \end{array} 
\right. 
\end{equation}
where $k\geq 1$, $\tau_0:=0$, and
\begin{align*}
j(t,\pi,x)=
\exp\left\{\alpha x+(\lambda-\frac{\alpha^2}{2})t  \right\}\frac{\pi}{1-\pi}+\lambda\int_0^t\exp\left\{(\lambda+\frac{\alpha x}{t})u-\frac{\alpha^2u^2}{2t}  \right\}du.
\end{align*}
Thus at an observation time $\tau_k$, the process $\Pit$ jumps from 
\[1-(1-\Pi^{\hat{\tau}}_{\tau_{k-1}})e^{-\lambda(\tau_k-\tau_{k-1})}\] 
to 
\[ \frac{j(\tau_{k}-\tau_{k-1},\Pi^{\hat{\tau}}_{\tau_{k-1}},X_{\tau_{k}}-X_{\tau_{k-1}})}{1+j(\tau_{k}-\tau_{k-1},\Pi^{\hat{\tau}}_{\tau_{k-1}},X_{\tau_k}-X_{\tau_{k-1}})}.\]
Moreover, $(t, \Pit_t)$ with respect to $\mathbb{F}^{\hat\tau}$ is a piece-wise deterministic Markov process in the sense of \cite[Section 2]{MR1283589} and therefore has the strong Markov property. 

At time $t=0$, the observer could decide that he will not be making any observations (by setting $\tau_1=\infty$). Then $\Pit$ evolves deterministically (see \eqref{eq:defn-pi}), and the corresponding cost of following that strategy is thus given by
\begin{eqnarray*}
F(\pi) &=& \inf_{t\geq 0}\left\{1-\Pit_{t}+c \int_0^{t}\Pit_s ds\right\} \\
&=& \inf_{t\geq 0}\left\{(1-\pi)e^{-\lambda t}+ct-c(1-\pi)\frac{1-e^{-\lambda t}}{\lambda}  \right\} \\
&=& \left\{\begin{array}{ll}
 \frac{c}{\lambda} \left(\pi+\log\frac{(\lambda+c)(1-\pi)}{c}\right) & \pi < \frac{\lambda}{c+\lambda};\\
 1-\pi &  \pi  \geq \frac{\lambda}{c+\lambda}.
\end{array}\right.
\end{eqnarray*}
Moreover, the optimizer $t^*$ is given by
\begin{equation}
t^*(\pi)=\left\{\begin{array}{ll}
\frac{1}{\lambda}\log\frac{(\lambda+c)(1-\pi)}{c} & \pi < \frac{\lambda}{c+\lambda};\\
0 &  \pi  \geq \frac{\lambda}{c+\lambda}.\end{array}\right.
\end{equation}

For a given sequence $\hat\tau\in\mathcal T$ of observations, let $\mathcal S^{\hat\tau}_0\subseteq \mathcal S^{\hat\tau}$ denote the set of $\mathbb F^{\hat\tau}$-stopping times $\tau$ such that $\mathbb P_\pi$-a.s. $\tau=\tau_k$ for some $k=k(\omega)$.

\begin{prop}\label{prop1.1}
The quickest detection problem with costly observations $V(\pi)$ in (\ref{minimalrisk}) can be represented as
\begin{equation}
\begin{split}
V(\pi)&=\inf_{\hat{\tau}\in\mathcal{T}}\inf_{\tau\in \mathcal S^{\hat\tau}_0}
\mathbb E_{\pi}\Big[F(\Pi^{\hat{\tau}}_{\tau})+c\tau-\frac{c}{\lambda}\sum_{k=0}^{\infty}(1-\Pi^{\hat{\tau}}_{\tau_k})(1-e^{-\lambda(\tau_{k+1}-\tau_k)})\mathbbm{1}_{\{\tau_{k+1}\leq \tau\}}\\
&\hspace{45mm}+d\sum_{k=1}^{\infty}\mathbbm{1}_{\{\tau_k\leq \tau\}}\Big],
\end{split}
\end{equation}
i.e. the value function is a combined optimal stopping and impulse control problem.
\end{prop}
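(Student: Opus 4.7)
The plan is to decompose any admissible pair $(\hat\tau,\tau)$ with $\tau\in\St$ into its restriction to the last observation time at or before $\tau$, plus a deterministic ``autopilot'' phase on $[\sigma,\tau]$, and to recognize that optimizing over the length of this autopilot phase produces precisely the function $F$.

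First I note that we may restrict to strategies using only finitely many observations up to $\tau$, since otherwise $\rho^\pi(\hat\tau,\tau)=\infty$ because $d>0$. For such a strategy, set $N:=\sup\{k\geq 0:\tau_k\leq\tau\}$ and $\sigma:=\tau_N$ (with the convention $\tau_0:=0$). Lemma~\ref{lem:st} implies $\sigma\in\mathcal S^{\hat\tau}_0$ and $u:=\tau-\sigma$ is $\cF_\sigma$-measurable. Using the deterministic evolution $\Pit_t=1-(1-\Pit_{\tau_k})e^{-\lambda(t-\tau_k)}$ on each interval $[\tau_k,\tau_{k+1})$ prescribed by \eqref{eq:defn-pi}, direct integration gives
\begin{equation*}
c\int_{\tau_k}^{\tau_{k+1}}\Pit_s\,ds=c(\tau_{k+1}-\tau_k)-\frac{c}{\lambda}(1-\Pit_{\tau_k})(1-e^{-\lambda(\tau_{k+1}-\tau_k)}),
\end{equation*}
and, on $[\sigma,\tau]$, $(1-\Pit_\tau)+c\int_\sigma^\tau\Pit_s\,ds=G(u,\Pit_\sigma)$, where
\begin{equation*}
G(u,\pi):=(1-\pi)e^{-\lambda u}+cu-\frac{c}{\lambda}(1-\pi)(1-e^{-\lambda u}).
\end{equation*}
Summing the first identity over $k=0,\ldots,N-1$, combining with the second, and using $\ind_{\{\tau_k\leq\tau\}}=\ind_{\{\tau_k\leq\sigma\}}$, I obtain
\begin{equation*}
\rho^\pi(\hat\tau,\tau)=\E_\pi\!\left[G(u,\Pit_\sigma)+c\sigma-\frac{c}{\lambda}\sum_{k=0}^{\infty}(1-\Pit_{\tau_k})(1-e^{-\lambda(\tau_{k+1}-\tau_k)})\ind_{\{\tau_{k+1}\leq\sigma\}}+d\sum_{k=1}^{\infty}\ind_{\{\tau_k\leq\sigma\}}\right].
\end{equation*}

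By definition of $F$ we have $G(u,\pi)\geq F(\pi)$ for every $u\geq 0$ with equality at $u=t^*(\pi)$. Hence, pointwise replacement of $G(u,\Pit_\sigma)$ by $F(\Pit_\sigma)$ followed by taking expectations yields $V(\pi)\geq\text{RHS}$, with $\sigma\in\mathcal S^{\hat\tau}_0$ serving as competitor. For the reverse inequality, given any $(\hat\tau,\sigma)$ with $\sigma\in\mathcal S^{\hat\tau}_0$, I would truncate $\hat\tau$ after the observation equal to $\sigma$ by setting each subsequent $\tau_j=+\infty$ (which is consistent with Definition~\ref{optstr}), obtaining $\hat\tau'\in\mathcal T$, and define $\tau':=\sigma+t^*(\Pit_\sigma)$. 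Since $\Pit_\sigma$, and hence $t^*(\Pit_\sigma)$, is $\cF_\sigma$-measurable, and no further sampling information becomes available after $\sigma$ along $\hat\tau'$, the random time $\tau'$ is an $\mathbb F^{\hat\tau'}$-stopping time. Applying the identity above in reverse shows that $\rho^\pi(\hat\tau',\tau')$ equals exactly the integrand of the RHS evaluated at $(\hat\tau,\sigma)$, establishing $V(\pi)\leq\text{RHS}$.

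The main obstacle is the measurability bookkeeping: checking that $\sigma=\tau_N$ really is an element of $\mathcal S^{\hat\tau}_0$, that $u=\tau-\sigma$ is $\cF_\sigma$-measurable (so that the autopilot identity can be derived piecewise on $\{N=n\}$ and reassembled), and that the truncated schedule $\hat\tau'$ still meets the recursive measurability requirements of Definition~\ref{optstr}. Each of these points follows from Lemma~\ref{lem:st} and the recursive structure in Definition~\ref{optstr}, but their careful deployment is what makes the otherwise straightforward algebraic rearrangement rigorous.
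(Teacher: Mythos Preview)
Your proof is correct and follows essentially the same route as the paper's: decompose a general $\bar\tau\in\St$ as an observation time $\sigma\in\mathcal S^{\hat\tau}_0$ plus an $\cF_\sigma$-measurable residual $u$, optimize over $u$ to produce $F(\Pit_\sigma)$, and evaluate $c\int_0^\sigma\Pit_s\,ds$ interval-by-interval via \eqref{eq:defn-pi}. You spell out the two inequalities and the measurability checks in more detail than the paper does, but the argument is the same.
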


\begin{proof}
It follows from Lemma~\ref{lem:st} that any stopping time $\bar{\tau}\in \St$ can be written as $\bar{\tau}=\tau+\bar{t}$, for $\tau\in \mathcal S^{\hat\tau}_0$ and for some $\mathbb F^{\hat\tau}_\tau$-measurable random variable 
$\bar{t}$. Then by conditioning at $\tau$ first, optimizing over the stopping times in $\St$ and then taking expectations we obtain
\begin{equation}
\label{eq:valuefunc0}
V(\pi)=\inf_{\hat{\tau}\in\mathcal{T}}\inf_{\tau\in \mathcal S^{\hat\tau}_0}\E_\pi\left[F(\Pi_{\tau})+c \int_0^{\tau}\Pit_s ds+d\sum_{k=1}^{\infty}\mathbbm{1}_{\{\tau_k\leq \tau\}}\right].
\end{equation}
The rest of the proof can be done using \eqref{eq:defn-pi} and partitioning the integral into integrals over $[\tau_i,\tau_{i+1})$.
\end{proof}

%

In a related work \cite{DE}, the sequential hypothesis testing problem for the drift of a Wiener process was considered under the same assumption of costly observations. In the sequential hypothesis testing problem, the nature of the data remains the same and the goal is to determine the nature of data as soon as possible. In contrast in the quickest detection problem, the nature of data changes at a given time and the goal is to determine the change time as soon as it happens by balancing detection delay and false alarm frequency. As a result of this difference, the evolution of the sufficient statistic $\Pi$ and the pay-off function are different. This for example leads to a different functional operator in the next section. Although both papers use monotone functional operators that preserve concavity, because of the form of our operator the proof of preservation of concavity  is far more difficult in our case and requires a new idea.  We will also see that the solution structure is different: For example in the Quickest detection problem although there is no observation rights left the decision maker still does not declare the decision and waits more; see Section~\ref{sec:opt-strategy}.

\section{A functional characterization of the value function}

In this section we study the value function $V$ and show that it is a fixed-point of a monotone operator $\J$, in effect proving the so-called dynamic programming principle. This will then be used in the next section to construct an optimal strategy. 

To define the operator $\J$, let 
\[\mathbb F:=\{f:[0,1]\to[0,1]\mbox{ measurable and such that }f(\pi)\leq 1-\pi\}\]
and set 
$$
\mathcal{J}f(\pi)=\min\{F(\pi),\inf_{t>0}\mathcal{J}_0f(\pi,t)\}
$$
for $f\in\mathbb F$, where
\[\J_0f(\pi,t)=d+\E_\pi\left[f\left(\frac{j(t,\pi,X_t)}{1+j(t,\pi,X_t)}\right) + c(t-\Theta)^+\right].\]
Note that 
\[\E_\pi\left[(t-\Theta)^+\right]= t-(1-\pi)\frac{1-e^{-\lambda t}}{\lambda},\]
so
\begin{equation}
\label{J0}
\J_0f(\pi,t)=d+\E_\pi\left[f\left(\frac{j(t,\pi,X_t)}{1+j(t,\pi,X_t)}\right) + c t-c(1-\pi)\frac{1-e^{-\lambda t}}{\lambda}  \right].
\end{equation}

\begin{prop}\label{properties}
The operator $\J$
\begin{itemize}
\item[(i)]
is monotone: $f_1\leq f_2\implies \J f_1\leq \J f_2$;
\item[(ii)]
is concave: $\mathcal J(af_1+(1-a)f_2)\geq a\J f_1+(1-a)\J f_2$ for $a\in[0,1]$;
\item[(iii)]
satisfies $\J 0(\pi)=\min \{F(\pi),d\}$;
\item[(iv)]
has at most one fixed point $f\in\mathbb F$ such that $f=\J f$;
\item[(v)]
is concavity preserving: if $f\in\mathbb F$ is concave, then also $\J f$ is concave.
\end{itemize}
\end{prop}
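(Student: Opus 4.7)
Parts (i)--(iii) should fall out of the structure of $\J_0$: the map $f\mapsto \J_0 f(\pi,t)$ is affine in $f$, while all other contributions depend only on $(\pi,t)$. Monotonicity (i) is immediate since $\E_\pi$, $\inf_t$, and $\min\{F(\pi),\cdot\}$ all preserve pointwise inequalities. For (ii), affineness of $\J_0$ in $f$ turns a convex combination into the same combination inside $\J_0$, after which the elementary inequality $\min\{A,aB+(1-a)C\}\geq a\min\{A,B\}+(1-a)\min\{A,C\}$ propagates concavity through $\inf_t$ and $\min\{F,\cdot\}$. For (iii), setting $f\equiv 0$ leaves $\J_0 0(\pi,t)=d+c\bigl[t-(1-\pi)(1-e^{-\lambda t})/\lambda\bigr]$; since $1-e^{-\lambda t}\leq\lambda t$ the bracketed quantity is nonnegative and vanishes as $t\downarrow 0$, so $\inf_{t>0}\J_0 0(\pi,t)=d$ and hence $\J 0=\min\{F,d\}$.

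For uniqueness (iv), my plan is a monotone sandwich. Every $f\in\mathbb F$ lies in $[0,1-\pi]$ and $\J$ is monotone with $\J\mathbb F\subseteq\mathbb F$, so any fixed point $f^*$ satisfies $\J^n 0\leq f^*\leq \J^n(1-\pi)$ for each $n$. The lower iterates are nondecreasing and the upper ones nonincreasing, converging to limits $g_\infty\leq f_\infty$. To identify $g_\infty=f_\infty$ I would exploit the strictly positive observation cost $d$: starting from a $\pi$ in the common observation region, the fixed-point identities yield $f_\infty(\pi)-g_\infty(\pi)\leq \E_\pi\bigl[(f_\infty-g_\infty)(\Pit_t)\bigr]$ at a nearly optimal $t$, and iterating produces a stochastic belief sequence whose passage into the stopping region (where $f_\infty=g_\infty$) occurs in finite expected time because any admissible strategy has $d\cdot\E[N]\leq 1-\pi$, with $N$ the number of observations used; backward propagation then forces $g_\infty\equiv f_\infty$.

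The main obstacle is (v), concavity preservation. The plan is to reduce it to concavity in $\pi$ of $\E_\pi[f(\Pit_t)]$ at each fixed $t>0$; once this is secured, concavity of $\J_0 f(\cdot,t)$ follows because the remaining terms are affine in $\pi$, the infimum over $t$ of concave functions is concave, and $\min\{F,\cdot\}$ with the concave $F$ preserves concavity. (Concavity of $F$ itself is a direct computation: on $\pi<\lambda/(c+\lambda)$ the second derivative equals $-c/[\lambda(1-\pi)^2]<0$, the other branch is linear, and the two meet with matching first derivative.) The key ingredient is a Bayesian convex-combination identity. Under $\mathbb P_\pi$, the density of $X_t$ admits the affine decomposition $\rho_\pi(x)=\pi a(x)+(1-\pi)c(x)$, where $a$ and $c$ are the conditional densities of $X_t$ given $\Theta=0$ and $\Theta>0$ respectively, and neither depends on $\pi$. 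The posterior $u(\pi,x):=j(t,\pi,x)/(1+j(t,\pi,x))$ from \eqref{eq:defn-pi} is a M\"obius function of $\pi$, and a direct algebraic check produces, for $\pi=\alpha\pi_1+(1-\alpha)\pi_2$, the decomposition
\[
u(\pi,x)=\gamma(x)u(\pi_1,x)+(1-\gamma(x))u(\pi_2,x),\qquad \gamma(x)=\frac{\alpha\rho_{\pi_1}(x)}{\rho_\pi(x)}.
\]
Applying concavity of $f$ pointwise and multiplying by $\rho_\pi(x)$, the weights $\gamma(x)\rho_\pi(x)=\alpha\rho_{\pi_1}(x)$ and $(1-\gamma(x))\rho_\pi(x)=(1-\alpha)\rho_{\pi_2}(x)$ reassemble after integration in $x$ to give $\E_\pi[f(\Pit_t)]\geq \alpha\E_{\pi_1}[f(\Pit_t)]+(1-\alpha)\E_{\pi_2}[f(\Pit_t)]$. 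The delicate point---and presumably where the authors' ``new idea'' enters---is verifying the convex decomposition of $u(\pi,x)$ given the intricate form of $j(t,\pi,x)$, with its exponentially weighted prior ratio and deterministic integral correction; once this identity is in hand, the remaining manipulations are routine.
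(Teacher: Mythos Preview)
Your treatment of (i)--(iii) coincides with the paper's. For (v) you and the authors take genuinely different routes; for (iv) your plan is substantially different and, as written, incomplete.

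\medskip
\noindent\textbf{Part (iv).} The paper does not build a sandwich and pass to limits. Instead it uses (ii) and (iii) directly: if $f_1,f_2\in\mathbb F$ are fixed points with $f_1(\pi)<f_2(\pi)$ somewhere, set $a_0=\sup\{a\in[0,1]:af_2\le f_1\}\in[0,1)$. Since $\J 0=\min\{F,d\}$ is bounded below by $\kappa^{-1}(1-\pi)$ for some $\kappa>0$, monotonicity and concavity of $\J$ give
\[
f_1=\J f_1\ge \J(a_0 f_2)\ge a_0\J f_2+(1-a_0)\J 0\ge \bigl(a_0+(1-a_0)\kappa^{-1}\bigr)f_2,
\]
contradicting the definition of $a_0$. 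This two-line argument is much cleaner than your iterative scheme, whose key step---showing the upper and lower limits $g_\infty,f_\infty$ coincide---is only sketched. In particular, your claim that the difference $f_\infty-g_\infty$ is a supermartingale along an optimal belief sequence uses ``the stopping region (where $f_\infty=g_\infty$)'', but the stopping regions for $g_\infty$ and $f_\infty$ need not agree, and on $\{g_\infty<F=f_\infty\}$ the argument breaks down. This could perhaps be repaired, but you are not using the concavity (ii) at all, and that is precisely what makes the authors' proof short.

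\medskip
\noindent\textbf{Part (v).} Here your approach is correct and differs from the paper's. The authors introduce the change of measure $d\mathbb Q_\pi=\dfrac{e^{\lambda t}}{(1-\pi)(1+j(t,\pi,X_t))}\,d\mathbb P_\pi$, verify that $X_t\sim N(0,t)$ under $\mathbb Q_\pi$ (hence the law does not depend on $\pi$), and then observe that, since $j(t,\pi,x)$ is affine in $\pi/(1-\pi)$, the perspective $\pi\mapsto (1-\pi)(1+j)\,f\!\bigl(j/(1+j)\bigr)$ is concave; taking $\mathbb Q_\pi$-expectations preserves concavity. Your route stays under $\mathbb P_\pi$ and uses the Bayesian mixture identity directly. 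In fact the step you flag as ``delicate'' is not: both the density $\rho_\pi(x)$ of $X_t$ and the numerator $\mathbb P_\pi(\Theta\le t,\,X_t\in dx)$ are affine in $\pi$ simply because the prior law of $\Theta$ is the mixture $\pi\delta_0+(1-\pi)\mathrm{Exp}(\lambda)$; the explicit form of $j$ is never needed. Writing $u(\pi,x)=N_\pi(x)/\rho_\pi(x)$ with $N_\pi,\rho_\pi$ affine in $\pi$, one has $N_\pi=\alpha N_{\pi_1}+(1-\alpha)N_{\pi_2}$ and $\rho_\pi=\alpha\rho_{\pi_1}+(1-\alpha)\rho_{\pi_2}$, from which your convex decomposition $u(\pi,x)=\gamma(x)u(\pi_1,x)+(1-\gamma(x))u(\pi_2,x)$ with $\gamma(x)=\alpha\rho_{\pi_1}(x)/\rho_\pi(x)$ is immediate. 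Your argument is thus more elementary and model-independent, while the paper's change-of-measure computation explicitly identifies the reference law $N(0,t)$, which is of independent interest.
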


\begin{proof}
(i) and (iii) are immediate. For (ii), let $f_1,f_2\in\mathbb F$ and let $a\in[0,1]$. Then
\begin{eqnarray*}
\J(af_1+(1-a)f_2) &=& \min\left\{F, \inf_{t>0} \left\{ a\J_0f_1 +(1-a)\J_0f_2\right\}\right\}\\
&\geq& \inf_{t>0}\left\{ a\min\{F,\J_0 f_1\} + (1-a)\min\{F,\J_0 f_2\}\right\}\\
&\geq& a \J f_1 +(1-a)\J f_2.
\end{eqnarray*}

For (iv) we argue as in \cite[Lemma 54.21]{MR1283589};
assume that there exist two distinct fixed points of $\J$, i.e. $f_1=\J f_1$ and $f_2=\J f_2$ for $f_1,f_2\in\mathbb F$ such that
$f_1(\pi)<f_2(\pi)$ (without loss of generality) for some $\pi\in[0,1)$. 
Let $a_0:=\sup\{a\in[0,1]: af_2\leq f_1\}$, and note that $a_0\in[0,1)$. 
From (iii) it follows that there exists $\kappa>0$ such that $\kappa \J 0\geq 1-\pi$, $\pi\in[0,1]$, so using 
(i) and (ii) we get
\[f_1 = \J f_1 \geq \J(a_0f_2) \geq a_0\J f_2 + (1-a_0)\J 0\\
\geq (a_0+(1-a_0)\kappa^{-1})f_2,\]
which contradicts the definition of $a_0$.

For (v), first note that $F$ is concave. Since the infimum of concave functions is again concave, it therefore follows from \eqref{J0} that it suffices to check that 
\[
\E_\pi\left[f\left(\frac{j(t,\pi,X_t)}{1+j(t,\pi,X_t)}\right)\right]\]
is concave in $\pi$ for any $t>0$ given and fixed. To do that, define measures $\mathbb Q_{\pi}$, $\pi\in[0,1)$, on $\sigma(X_t)$ by
\[d\mathbb Q_\pi := \frac{e^{\lambda t}}{(1-\pi)(1+j(t,\pi,X_t))} d\mathbb P_\pi.\]
Then 
\[\E_\pi\left[\frac{d\mathbb Q_\pi}{d\mathbb P_\pi}\right]=\frac{e^{\lambda t}}{1-\pi}\int_{\R} \frac{1}{1+j(t,\pi,y)}\mathbb P_{\pi}(X_t\in dy).\]
Denoting by $\varphi$ the density of the standard normal distribution, we have
\begin{eqnarray*}
 \frac{ \mathbb P_{\pi}(X_t\in dy) }{1-\pi}
&=&  \frac{\pi}{1-\pi} \bP_\pi(X_t \in dy\vert \Theta=0) + \lambda\int_0^{t}  \bP_\pi(X_t \in dy\vert \Theta=s)e^{-\lambda s}ds\\
&&+ \bP_{\pi}(X_t \in dy\vert \Theta>t)e^{-\lambda t} \\
&=& \frac{\pi}{(1-\pi)\sqrt t} \varphi\left(\frac{y-\alpha t}{\sqrt t}\right) 
+ \frac{\lambda}{\sqrt t}\int_0^te^{-\lambda s}\varphi\left(\frac{y-\alpha (t-s)}{\sqrt t}\right)ds\\
&& + \frac{e^{-\lambda t}}{\sqrt t}\varphi\left(\frac{y}{\sqrt t}\right)\\
&=& e^{-\lambda t}(1+j(t,\pi,y))\varphi\left(\frac{y}{\sqrt t}\right). 
\end{eqnarray*}
Thus 
\[\E_\pi\left[\frac{d\mathbb Q_\pi}{d\mathbb P_\pi}\right]=\frac{1}{\sqrt t}\int_\R\varphi\left(\frac{y}{\sqrt t}\right)\,dy =1\]
so $\mathbb Q_\pi$ is a probability measure. Furthermore, the random variable $X_t$ is 
$N(0,t)$-distributed under $\mathbb Q_\pi$;
in particular, the $\mathbb Q_\pi$-distribution of $X_t$ does not depend on $\pi$.

Since $j(t,\pi,x)$ is affine in $\pi/(1-\pi)$, the function 
\[\pi\mapsto (1-\pi)f\left(\frac{j(t,\pi,x)}{1+j(t,\pi,x)}\right)(1+j(t,\pi,x))\] 
is concave if $f$ is concave. It thus follows from
\begin{eqnarray*}
&& \hspace{-10mm} \E_\pi\left[f\left(\frac{j(t,\pi,X_t)}{1+j(t,\pi,X_t)}\right)\right] \\
&=&
(1-\pi)\exp\{-\lambda t\} \E^{\mathbb Q_\pi}\left[f\left(\frac{j(t,\pi,X_t)}{1+j(t,\pi,X_t)}\right)(1+j(t,\pi,X_t))\right]
\end{eqnarray*}
and \eqref{J0} that $\pi\mapsto \mathcal J_0f(\pi,t)$ is concave, which completes the proof.
\end{proof}

Next we define a sequence $\{f_n\}_{n=0}^{\infty}$ of functions on $[0,1]$ by setting
$$
f_0(\pi)=F(\pi), \quad f_{n+1}(\pi)=\mathcal{J}f_n(\pi), \quad  n\geq 0.
$$

\begin{prop}\label{prop:decr-con}
For $\{f_n\}_{n=1}^\infty$ we have that
\begin{itemize}
\item[(i)]
the sequence is decreasing;
\item[(ii)]
each $f_n$ is concave.
\end{itemize}
\end{prop}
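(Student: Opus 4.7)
The plan is to prove both claims by straightforward induction on $n$, leveraging the structural properties of $\mathcal{J}$ already established in Proposition~\ref{properties}. A small preliminary step is to verify that each $f_n$ lies in $\mathbb{F}$, so that the iteration $f_{n+1}=\mathcal{J}f_n$ is well-defined. For $f_0=F$ this is immediate from the explicit formula, since $0\leq F(\pi)\leq 1-\pi$. For the inductive step, if $f_n\in\mathbb F$ then $\mathcal{J}_0f_n(\pi,t)\geq d>0$ (because $f_n\geq 0$ and $c(t-\Theta)^+\geq 0$), whereas $\mathcal{J}f_n(\pi)\leq F(\pi)\leq 1-\pi$ directly from the definition of $\mathcal{J}$. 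Thus $f_{n+1}\in\mathbb F$.

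For (i), the base case $f_1\leq f_0$ reduces to $\mathcal{J}F(\pi)=\min\{F(\pi),\inf_{t>0}\mathcal{J}_0F(\pi,t)\}\leq F(\pi)$, which is tautological. Assuming $f_{n+1}\leq f_n$, the monotonicity asserted in Proposition~\ref{properties}(i) yields
\[
f_{n+2}=\mathcal{J}f_{n+1}\leq \mathcal{J}f_n=f_{n+1},
\]
completing the induction.

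For (ii), the base case is the concavity of $f_0=F$, which either follows from the explicit expression for $F$ (a linear piece glued to a concave logarithm at the point where both values and one-sided derivatives match) or, more cleanly, from the fact that $F$ is the pointwise infimum over $t\geq 0$ of the affine-in-$\pi$ functions $\pi\mapsto (1-\pi)e^{-\lambda t}+ct-c(1-\pi)(1-e^{-\lambda t})/\lambda$. The inductive step is then immediate from the concavity-preserving property in Proposition~\ref{properties}(v): if $f_n$ is concave (and in $\mathbb F$), then so is $f_{n+1}=\mathcal J f_n$.

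There is no genuine obstacle here; the work has already been done in Proposition~\ref{properties}. The only point requiring care is keeping track of the domain $\mathbb F$ along the iteration, which is why I would record the uniform bounds $0\leq f_n(\pi)\leq F(\pi)\leq 1-\pi$ explicitly before invoking monotonicity and concavity preservation.
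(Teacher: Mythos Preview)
Your proposal is correct and follows essentially the same approach as the paper: the base case $f_1\leq f_0$ from the definition of $\mathcal J$, induction via monotonicity (Proposition~\ref{properties}(i)) for part~(i), and the concavity of $F$ together with concavity preservation (Proposition~\ref{properties}(v)) for part~(ii). Your explicit verification that each $f_n\in\mathbb F$ and your justification of the concavity of $F$ via the infimum of affine functions add a bit more detail than the paper supplies, but the underlying argument is the same.
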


\begin{proof}
Clearly, $f_1 \leq F=f_0$, so Proposition~\ref{properties} (i) and a straightforward induction argument give that $f_n$ is  decreasing in $n$. Hence the pointwise limit $f_{\infty}:=\lim_{n\to\infty}f_n$ exists.
Furthermore, 
since $F$ is concave, each $f_n$ is concave by Proposition~\ref{properties} (v). 
\end{proof}

Thus the pointwise limit $f_{\infty}:=\lim_{n\to\infty}f_n$ exists. 
Since the pointwise limit of concave functions is concave, it follows that also $f_\infty$ is concave.

\begin{prop} \label{measurable}
Let $f\in\mathbb F$ be continuous.
For fixed $\pi\in[0,1]$, the function $t \mapsto \mathcal{J}_0f(\pi,t)$ attains its minimum for some point $t \in [0,\infty)$. Denote the first of these minimums by 
$t(\pi,f)$, i.e.
\begin{equation}
\label{t}
t(\pi,f):= \inf \{t \geq 0:  \inf_s \mathcal{J}_0f(\pi,s) =  \mathcal{J}_0f(\pi,t) \}.
\end{equation}
Then $\pi \mapsto t(\pi,f)$ is measurable. 
\end{prop}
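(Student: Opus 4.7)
The plan is first to show that $t\mapsto \mathcal J_0 f(\pi,t)$ is continuous on $[0,\infty)$ and tends to $+\infty$ as $t\to\infty$, so that a (first) minimizer exists; then to deduce measurability of $\pi\mapsto t(\pi,f)$ by a countable-dense-rationals argument that the continuity in $t$ justifies.

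For continuity of $t\mapsto\mathcal J_0 f(\pi,t)$ at a fixed $\pi\in[0,1)$, the deterministic summands $d+ct-c(1-\pi)(1-e^{-\lambda t})/\lambda$ are obviously continuous in $t$. For the expectation term, if $t_n\to t_0\ge 0$, then path continuity of $W$ (and hence of $X$) gives $X_{t_n}\to X_{t_0}$ $\bP_\pi$-a.s.; combined with joint continuity of $j$ (which at $t_0=0$ one verifies via the substitution $u=tv$ inside the integral in the definition of $j$) and continuity of $f$, this yields a.s.\ convergence of $f(j(t_n,\pi,X_{t_n})/(1+j(t_n,\pi,X_{t_n})))$. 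Dominated convergence with the constant dominator $\|f\|_\infty\le 1$ finishes the continuity claim. Coercivity is immediate from the bound $\mathcal J_0 f(\pi,t)\ge d+ct-c/\lambda$, which uses only $f\ge 0$. Consequently, the continuous function $t\mapsto\mathcal J_0 f(\pi,t)$ attains its infimum at a finite point, the minimizer set $\{t\ge 0:\mathcal J_0 f(\pi,t)=v(\pi)\}$ (with $v(\pi):=\inf_{s\ge 0}\mathcal J_0 f(\pi,s)$) is non-empty and closed, and $t(\pi,f)$ is its minimum.

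For measurability in $\pi$, the explicit integral representation of the expectation (using the $\bP_\pi$-density of $X_t$ derived in the proof of Proposition~\ref{properties}) shows that $\pi\mapsto\mathcal J_0 f(\pi,t)$ is Borel measurable for each fixed $t\ge 0$. Continuity in $t$ then lets us write
\[v(\pi)=\inf_{t\in \mathbb Q_+\cup\{0\}}\mathcal J_0 f(\pi,t),\]
a countable infimum of measurable functions, hence measurable. For any $a\ge 0$, continuity of $t\mapsto\mathcal J_0 f(\pi,t)$ on the compact interval $[0,a]$ together with closedness of the minimizer set yields
\[\{\pi:t(\pi,f)\le a\}=\Big\{\pi:\inf_{t\in D_a}\mathcal J_0 f(\pi,t)\le v(\pi)\Big\},\]
where $D_a$ is any fixed countable dense subset of $[0,a]$. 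The right-hand side is measurable, so $t(\cdot,f)$ is measurable.

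The main technical point is the continuity claim; once this is in hand, the measurability conclusion is routine via the standard trick of replacing a continuous infimum by a countable rational one.
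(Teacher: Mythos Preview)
Your argument is correct and complete: continuity of $t\mapsto\mathcal J_0 f(\pi,t)$ together with coercivity gives existence of a first minimizer, and the countable-rationals reduction yields Borel measurability of $\pi\mapsto t(\pi,f)$ from separate measurability in $\pi$ and continuity in $t$.

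The paper takes a different route. It asserts that $(\pi,t)\mapsto\mathcal J_0 f(\pi,t)$ is \emph{jointly} continuous, and then shows directly that $\pi\mapsto t(\pi,f)$ is lower semi-continuous: if $\pi_i\to\pi_\infty$ and $t_i=t(\pi_i,f)$, coercivity forces $\{t_i\}$ to be bounded, and along a subsequence converging to $t_\infty=\liminf t_i$ one passes to the limit in the optimality inequality $\mathcal J_0 f(\pi_{i_j},t_{i_j})\le\mathcal J_0 f(\pi_{i_j},s)$ to conclude that $t_\infty$ is a minimizer for $\pi_\infty$, hence $t(\pi_\infty,f)\le t_\infty$. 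This gives a strictly stronger conclusion (lower semi-continuity rather than mere measurability), at the price of needing joint continuity, which the paper states but does not argue in detail. Your approach is more elementary in that it only requires continuity in $t$ and measurability in $\pi$ separately, and it makes the measurable-selection mechanism explicit; the paper's approach is shorter once joint continuity is granted and delivers a topological, not just measure-theoretic, regularity of the argmin.
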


\begin{proof}
Observe that $(t,\pi) \mapsto \mathcal{J}_0f(\pi,t)$ is a finite continuous function which 
approaches $\infty$ as $t \to \infty$. It follows that  $t (\pi,f)$ is finite.

We will prove the measurability of $\pi \mapsto t(\pi,f)$ by showing that it is lower semi-continuous. 
Let $\pi_i \to \pi_{\infty}$ and let $t_i=t(\pi_i,f)$. Because $t \to ct$ is the dominating term in $t \mapsto \J_0 f(\pi,t)$, 
it is clear that the sequence $\{t_i\}_{i \in \mathbb{N}}$ is bounded. 
It follows that $t_{\infty}:=\liminf t_i<\infty$; let $\{t_{i_j}\}_{j=1}^\infty$ be a subsequence such that 
$t_{i_j}\to t_\infty$. Then, by the Fatou lemma, 
\[
\J_{0}f(\pi_{\infty}, t_{\infty}) \leq \liminf_{j \to \infty} \J_0f(\pi_{i_j}, t_{i_j}) = 
\lim_{j \to \infty} \J_0 f(\pi_{i_j},t_{i_j}) = \J_{0} f(\pi_{\infty},t_\infty).
\]
Thus 
\[
t(\pi_{\infty},f) \leq t_{\infty} = \liminf_{i\to\infty}t (\pi_i,f),
\]
which establishes the desired lower semi-continuity. 
\end{proof}

\begin{prop}
The function $f_\infty$ is the unique fixed point of the operator $\J$.
\end{prop}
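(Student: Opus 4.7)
The plan is to establish that $f_\infty$ is a fixed point of $\J$ by passing to the limit in the recursion $f_{n+1}=\J f_n$, and then invoke Proposition~\ref{properties}(iv) for uniqueness. The work therefore splits into verifying $f_\infty\in\mathbb F$, checking pointwise continuity of $\J$ along the decreasing sequence $\{f_n\}$, and identifying the limit of $\J f_n$ with $\J f_\infty$.

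First, I would verify $f_\infty\in\mathbb F$. A straightforward induction shows each $f_n\in\mathbb F$: $f_0=F\in\mathbb F$, and if $f_n\in\mathbb F$ then $\J f_n\leq F\leq 1-\pi$, while $\J_0 f_n\geq d\geq 0$ and $F\geq 0$ give $\J f_n\geq 0$. Measurability is preserved under pointwise limits, as is the bound $f(\pi)\leq 1-\pi$, so $f_\infty\in\mathbb F$.

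The main step is to show $\J f_n(\pi)\to \J f_\infty(\pi)$ for every $\pi$. Since $\{f_n\}$ is monotonically decreasing and uniformly bounded (between $0$ and $1$), dominated convergence gives
\[
\E_\pi\left[f_n\!\left(\tfrac{j(t,\pi,X_t)}{1+j(t,\pi,X_t)}\right)\right]\;\downarrow\;\E_\pi\left[f_\infty\!\left(\tfrac{j(t,\pi,X_t)}{1+j(t,\pi,X_t)}\right)\right],
\]
hence $\J_0 f_n(\pi,t)\downarrow\J_0 f_\infty(\pi,t)$ for each $(\pi,t)$. To move the limit through the infimum over $t$, I would use a standard two-sided argument: monotonicity gives $\inf_{t>0}\J_0 f_n(\pi,t)\geq \inf_{t>0}\J_0 f_\infty(\pi,t)$ for every $n$, while for any fixed $t_0>0$,
\[
\inf_{t>0}\J_0 f_n(\pi,t)\;\leq\; \J_0 f_n(\pi,t_0)\;\longrightarrow\;\J_0 f_\infty(\pi,t_0),
\]
and taking the infimum over $t_0$ on the right yields the reverse inequality in the limit. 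Therefore $\inf_{t>0}\J_0 f_n(\pi,t)\to \inf_{t>0}\J_0 f_\infty(\pi,t)$, and taking minimum with $F(\pi)$ gives $\J f_n(\pi)\to\J f_\infty(\pi)$. Passing to the limit in $f_{n+1}=\J f_n$ then yields $f_\infty=\J f_\infty$.

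Uniqueness is immediate: $f_\infty\in\mathbb F$ is a fixed point, and Proposition~\ref{properties}(iv) states that $\J$ admits at most one such fixed point. The only potentially delicate step is the interchange of limit and infimum in $t$, but the monotone decrease of $\{f_n\}$ and the uniform coercive term $ct$ in $\J_0f_n(\pi,t)$ (which bounds the relevant minimizers in a compact range, as already exploited in Proposition~\ref{measurable}) make this routine.
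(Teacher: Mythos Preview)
Your proof is correct and follows essentially the same approach as the paper: pass to the limit in $f_{n+1}=\J f_n$ using monotone convergence for the expectation at a fixed time, then invoke Proposition~\ref{properties}(iv) for uniqueness. The only cosmetic difference is that the paper plugs in the specific minimizer $t(\pi,f_\infty)$ from Proposition~\ref{measurable} for the upper bound on $\inf_{t>0}\J_0 f_n(\pi,t)$, whereas you use an arbitrary test point $t_0$ and then take the infimum---your version is slightly cleaner since it does not rely on the existence of a minimizer, and your final remark about coercivity is in fact unnecessary for the argument you wrote.
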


\begin{proof}
Since the operator $\J$ is monotone and $f_{n} \geq f_{\infty}$, it is clear that $f_{\infty} \geq \J f_{\infty}$. On the other hand,
\[
f_{n+1}(\pi)= \J f_n(\pi) \leq \min\{F(\pi), \J_0 f_n(\pi, t(\pi,f_\infty))\},
\]
where $t(\pi,f_\infty)$ is defined as in \eqref{t}. Letting $n \to \infty$ and using the monotone convergence theorem we obtain that $f_{\infty}$ is a fixed point. Since uniqueness is established in Proposition~\ref{properties}, this completes the proof.
\end{proof}

Next we introduce the problem of an agent who is allowed to make at most $n$ observations:
\begin{equation}
V_n(\pi):=\inf_{\hat{\tau}\in\mathcal{T}}\inf_{\tau\in \mathcal S_0^{\hat\tau},\tau\leq \tau_n}\rho^{\pi}(\hat{\tau},\tau).
\end{equation}
These functions can be sequentially generated using the integral operator $\J$.

\begin{prop}
We have $V_n=f_n$, $n\geq 0$.
\end{prop}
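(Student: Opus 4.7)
The plan is to prove $V_n=f_n$ by induction on $n\geq 0$. The base case $n=0$ is immediate: with no observations allowed, $\Pit$ evolves deterministically between jumps as $1-(1-\pi)e^{-\lambda t}$ by \eqref{eq:defn-pi}, and the infimum defining $V_0(\pi)$ reduces to the deterministic minimization carried out in Section~1, yielding $F(\pi)=f_0(\pi)$.

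For the inductive step, assume $V_n=f_n$. The core idea is to condition on the (deterministic) first observation time $\tau_1$ and to invoke the strong Markov property of $(t,\Pit_t)$, noted after Proposition~\ref{prop1.1}, in order to identify the subproblem after $\tau_1$ with an instance of $V_n$. For the lower bound $V_{n+1}\geq f_{n+1}$, I consider an arbitrary admissible pair $(\htau,\tau)$ with $\tau\in\mathcal S_0^{\htau}$ and $\tau\leq\tau_{n+1}$, and split into the cases $\tau=0$ and $\tau\geq\tau_1$. In the first case the contribution is $F(\pi)\geq f_{n+1}(\pi)$, since $f_{n+1}\leq f_0=F$ by Proposition~\ref{prop:decr-con}(i). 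In the second case, the cost incurred up to $\tau_1=t$ is $d+c\,\E_\pi[(t-\Theta)^+]$, while the remainder is an admissible strategy for the problem with at most $n$ further observations starting from $\Pit_{\tau_1}$. By the strong Markov property at $\tau_1$ (using Lemma~\ref{lem:st} to ensure measurability of the shifted strategy) together with the induction hypothesis, the conditional expectation of this remainder is at least $V_n(\Pit_{\tau_1})=f_n(\Pit_{\tau_1})$. Summing the two contributions yields total cost at least $\J_0f_n(\pi,t)$, and minimizing over $t>0$ and the two cases gives $V_{n+1}(\pi)\geq \J f_n(\pi)=f_{n+1}(\pi)$.

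For the reverse inequality $V_{n+1}\leq f_{n+1}$, I construct explicit strategies. Taking $\tau=0$ gives cost $F(\pi)$. For each fixed $t>0$, I set $\tau_1=t$, observe $X_t$, compute $\Pit_{\tau_1}$, and then follow a strategy that is within $\eps$ of optimal for $V_n(\Pit_{\tau_1})$, where the selection is made in an $\cF_{\tau_1}$-measurable way as a function of $\Pit_{\tau_1}$. The resulting total cost is $\J_0 f_n(\pi,t)+\eps$; letting $\eps\downarrow 0$ and infimizing over $t$ together with the $\tau=0$ alternative yields $V_{n+1}(\pi)\leq \J f_n(\pi)$. The main obstacle is precisely this measurable-selection step: for each realized value of $\Pit_{\tau_1}$ one must pick a near-optimal continuation such that the composite strategy satisfies the adaptedness requirements of Definition~\ref{optstr} (in particular, that $\tau_2$ be $\sigma(X_{\tau_1},\tau_1)$-measurable, and so on for the subsequent observations). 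This is handled by a measurable-selection argument in the spirit of Proposition~\ref{measurable}, which delivers the optimal first observation time for $V_n$ as a measurable function of the current posterior and can be iterated to produce a fully adapted near-optimal continuation.
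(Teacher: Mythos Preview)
Your argument is correct and follows the same inductive scheme as the paper: the base case and the lower bound $V_{n+1}\geq f_{n+1}$ are handled identically, by conditioning on the deterministic first observation time $\tau_1$ and invoking the Markov property together with the induction hypothesis.

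The only substantive difference lies in how the measurable-selection issue for the upper bound is resolved. The paper does not iterate Proposition~\ref{measurable}; instead it fixes $\tau_1=t(\pi,f_{n-1})$, partitions $[0,1)$ into finitely many intervals $(B_k)_k$, and on the event $\{\Pit_{\tau_1}\in B_k\}$ applies a fixed $\eps$-optimal continuation chosen for the center of $B_k$, relying on the continuity of $V_{n-1}$ (via concavity, Proposition~\ref{prop:decr-con}) and of the cost functional in the initial point to control the error. Your route---iterating the measurable minimizer $\pi'\mapsto t(\pi',f_{k})$ from Proposition~\ref{measurable} down through $k=n-1,n-2,\ldots$---is a legitimate alternative and in fact yields an exactly optimal (not merely $\eps$-optimal) continuation; it does, however, tacitly use the continuity of each $f_k$ required in the hypothesis of Proposition~\ref{measurable}, which again comes from concavity. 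Both devices buy the same thing: an $\cF_{\tau_1}$-adapted continuation whose cost realizes $V_n(\Pit_{\tau_1})$, so that the composite strategy has total cost $\J_0 f_n(\pi,t)$.
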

\begin{proof}
First note that $V_0=f_0=F$. Now assume that $V_{n-1}=f_{n-1}$ for some $n\geq 1$.

\noindent \textbf{Step 1: $V_n(\pi)\geq f_n(\pi)$.} 

For any $\hat{\tau}\in\mathcal{T}$ and $\tau\in \mathcal S_0^{\hat\tau}$ we have
\begin{eqnarray}\label{eq:esdi}
&&\hspace{-10mm}\mathbb E_{\pi}\left[F(\Pi_{\tau\wedge\tau_n})+c \int_0^{\tau \wedge \tau_n }\Pit_s ds
+d\sum_{k=1}^{\infty}\mathbbm{1}_{\{\tau_k\leq \tau\wedge\tau_n\}}\right]\\
\notag
&=&\mathbb E_{\pi}\left[\mathbbm{1}_{\{\tau_1=0\}}F(\pi)\right]\\
&&\notag
 +\mathbb E_{\pi}\left[\mathbbm{1}_{\{\tau_1>0\}}\left(F(\Pi_{\tau\wedge\tau_n})+c \int_0^{\tau \wedge \tau_n }\Pit_s ds
+d\sum_{k=1}^{\infty}\mathbbm{1}_{\{\tau_k\leq \tau\wedge\tau_n\}}\right)\right] \\
\notag
&\geq&\mathbbm{1}_{\{\tau_1=0\}}F(\pi)
+\mathbbm{1}_{\{\tau_1>0\}} \mathbb E_{\pi}\left[ \left(d+c \int_{0}^{\tau_1} \Pit_s ds +V_{n-1}(\Pi_{\tau_1})\right)\right]\\
\notag
&=&\mathbbm{1}_{\{\tau_1=0\}}F(\pi)
+ \mathbbm{1}_{\{\tau_1>0\}}\mathbb E_{\pi}\left[\left(d+c \int_{0}^{\tau_1} \Pit_s ds
+f_{n-1}(\Pi_{\tau_1})\right)\right]\\
\notag
&\geq & \J f_{n-1}(\pi)=f_n(\pi),
\end{eqnarray}
where we used the fact that $\tau_1$ is deterministic and the Markov property of $\Pit$. We obtain the desired result from \eqref{eq:esdi} by taking the infimum over strategy pairs $(\hat\tau,\tau)$.

\textbf{Step 2: $V_n(\pi)\leq f_n(\pi)$.} 

We only need to prove this for the case $\J f_{n-1}(\pi) < F(\pi)$ (since otherwise $f_{n}(\pi) = \J f_{n-1}(\pi)=F(\pi) \geq V_n(\pi)$ already).

Note that $V_{0}=F=f_0$. We will assume that the assertion holds for $n-1$ and then prove it for $n$. We will follow ideas used in the proof of Theorem 4.1 in \cite{MR2806570}. Denoting $t_{n}:=t(\pi, f_{n-1})$, let us introduce a sequence $\hat\tau$
of stopping times
\begin{equation}\label{eq:eps-opt}
\tau_1=t_n, \quad \tau_{i+1}= \sum_{k}\tau^k_i \circ \theta_{t_n} \ind_{\{\Pit_{t_n} \in B_k\}}, \quad i=1,\cdots,n-1,
\end{equation}
where $(B_k)_k$ is a finite partition of $[0,1)$ by intervals and $\tau^k$ are $\epsilon$-optimal observation times for when the process $\Pi$ starts from the centre of these intervals. 
\footnote{$\theta$ is the shift operator in the Markov process theory, see e.g. \cite{MR2767184}}

Since $V_{n-1}$ is continuous, and the expected value (before optimizing) is a continuous function of the initial starting point for any strategy choice, which is due to the continuity of $\Pi$ with respect to its starting point, the above sequence is a $O(\eps)$ if the intervals are chosen to be fine enough.

Now we can write
\begin{eqnarray*}
 f_n(\pi) 
&=& ct_n +d-\frac{c}{\lambda} (1-\pi) (1-e^{-\lambda t_n})+\mathbb E_{\pi}[V_{n-1}(\Pi^{\hat{\tau}}_{t_n})]\\
&\geq& ct_n+d-\frac{c}{\lambda}(1-\pi)(1-e^{-\lambda t_n})-O(\epsilon)\\
&&\hspace{-10mm}+\mathbb E_{\pi}\left[\mathbb E_{\pi}\left[\left(F(\Pit_{\tau\wedge\tau_{n-1}})  + \int_0^{\tau_{n-1} \wedge \tau} \Pit_s ds +d\sum_{k=1}^{\infty}\mathbbm{1}_{\{\tau_k\leq \tau\wedge\tau_{n-1}\}}\right) \circ \theta_{t_n}\bigg| \cF_{t_n}\right] \right]\\
&=&\mathbb E_{\pi}\Big[F(\Pit_{\tau\wedge\tau_n}) + \int_0^{\tau \wedge \tau_n}\Pit_s ds +d\sum_{k=1}^{\infty}\mathbbm{1}_{\{\tau_k\leq \tau\wedge\tau_n\}}\Big] -O(\epsilon)\\
&\geq& V_n(\pi)-O(\epsilon),
\end{eqnarray*}
where we used the fact that
\[c\int_0^{t_n}\Pit_s ds = ct_n -\frac{c}{\lambda} (1-\pi) (1-e^{-\lambda t_n}).\]
Since $\epsilon>0$ can be made arbitrary small, this shows that $V_n(\pi)\leq f_n(\pi)$.
\end{proof}

\begin{prop}
We have that $V=f_{\infty}$, i.e., $V$ is the unique fixed point of $\J$.
\end{prop}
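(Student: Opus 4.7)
The plan is to sandwich $V$ between two copies of $f_\infty$, appealing to the equality $V_n=f_n$ from the previous proposition, to the monotone convergence $f_n\downarrow f_\infty$, and to the uniqueness of the fixed point already established in Proposition~\ref{properties}(iv).

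The upper bound $V\leq f_\infty$ is immediate: every pair $(\hat\tau,\tau)$ admissible for $V_n$ is also admissible for $V$, so $V(\pi)\leq V_n(\pi)=f_n(\pi)$ for each $n$, and letting $n\to\infty$ yields $V\leq f_\infty$.

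For the lower bound, I would fix an arbitrary pair $(\hat\tau,\tau)\in\mathcal T\times\mathcal S_0^{\hat\tau}$ with $\mathbb E_\pi[\tau]<\infty$ (the standing reduction noted just after \eqref{eq:valuefunc}) and
\[
\mathbb E_\pi\!\left[F(\Pit_\tau)+c\int_0^\tau \Pit_s\,ds+d\sum_{k=1}^\infty\mathbbm{1}_{\{\tau_k\leq\tau\}}\right]<\infty,
\]
and consider the truncations $\tau\wedge\tau_n\in\mathcal S_0^{\hat\tau}$. Since $\tau\wedge\tau_n\leq\tau_n$, the pair $(\hat\tau,\tau\wedge\tau_n)$ is admissible for $V_n$, so by Proposition~\ref{prop1.1},
\[
\mathbb E_\pi\!\left[F(\Pit_{\tau\wedge\tau_n})+c\int_0^{\tau\wedge\tau_n}\Pit_s\,ds+d\sum_{k=1}^\infty\mathbbm{1}_{\{\tau_k\leq\tau\wedge\tau_n\}}\right]\geq V_n(\pi)=f_n(\pi)\geq f_\infty(\pi).
\]
Setting $N:=\sum_{k\geq 1}\mathbbm{1}_{\{\tau_k\leq\tau\}}$, the finite-cost assumption forces $d\,\mathbb E_\pi[N]<\infty$, so $N$ is a.s.\ finite and $\mathbb P_\pi(\tau>\tau_n)=\mathbb P_\pi(N>n)\to 0$. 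On $\{\tau\leq\tau_n\}$ the two integrands coincide, and on $\{\tau>\tau_n\}$ their difference is dominated by $2\sup|F|+c\tau+dN$, which is integrable and independent of $n$. Dominated convergence lets the left-hand side converge to the original cost, which is therefore $\geq f_\infty(\pi)$; taking the infimum over $(\hat\tau,\tau)$ yields $V\geq f_\infty$.

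The main obstacle is this dominated-convergence step: one must simultaneously control the non-signed error $F(\Pit_\tau)-F(\Pit_{\tau_n})$, the residual integral $c\int_{\tau_n}^{\tau}\Pit_s\,ds$, and the uncounted observation costs $d\sum_{k>n}\mathbbm{1}_{\{\tau_k\leq\tau\}}$. Boundedness of $F$ handles the first, $\mathbb E_\pi[\tau]<\infty$ handles the second, and $\mathbb E_\pi[N]<\infty$ (a free consequence of the per-observation cost $d>0$) handles the third.
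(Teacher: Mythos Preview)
Your argument is correct and follows essentially the same route as the paper: truncate an admissible pair to $(\hat\tau,\tau\wedge\tau_n)$, invoke $V_n=f_n$, and pass to the limit. The only cosmetic difference is that the paper first bounds the running-cost integral and the observation-cost sum above by their untruncated versions and then applies bounded convergence to the $F$-term alone, whereas you treat all three terms at once via dominated convergence using the integrable majorant $2\sup|F|+c\tau+dN$; both work.
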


\begin{proof}
Since $V_n=f_n\to f_\infty$, it suffices to show $\lim_{n\to\infty}V_n=V$. It follows by definition that $V(\pi)\leq V_n(\pi)$ for any $n\geq 1$ and $\pi\in[0,1]$. We thus only need to prove that $\lim_{n}V_n(\pi) \leq V(\pi)$. Assume that a pair $(\hat\tau, \tau)$ where $\hat{\tau}\in\mathcal{T}$ 
and $\tau\in\St_0$ is an $\epsilon$-optimizer for \eqref{eq:valuefunc0}.
Then
\begin{eqnarray}\label{eq:anstpen}
V_n(\pi) &\leq& \mathbb E\left[F(\Pit_{\tau\wedge\tau_n})+\int_0^{\tau \wedge \tau_n}\Pit_s ds+d\sum_{k=1}^{\infty}\mathbbm{1}_{\{\tau_k\leq \tau\wedge\tau_n\}}\right]\\
\notag
&\leq& \mathbb E\left[F(\Pit_{\tau\wedge\tau_n})+\int_0^{\tau}\Pit_s ds+d\sum_{k=1}^{\infty}\mathbbm{1}_{\{\tau_k\leq \tau\}}\right].
\end{eqnarray}
Note that since $\tau(\omega)= \tau_k(\omega)$ for some $k=k(\omega)$, we have  
$\Pit_{\tau \wedge \tau_n}(w)= \Pit_{\tau}(\omega)$ if $n\geq k(\omega)$. 
As a result, and since $F$ is bounded and continuous, the bounded convergence theorem applied to 
\eqref{eq:anstpen} gives 
\[\lim_{n\to\infty} V_n(\pi)\leq V(\pi)+\epsilon.\]
Since $\epsilon>0$ is arbitrary, this completes the proof.
\end{proof}

This approach to proving the DPP in the context of Quickest Detection problems goes back to \cite{MR2260062}. There, the observations were coming from a process with jumps and the operator was defined through jump times of the observation process. On the other hand, the operator is defined through the observation times (which are now also part of decision makers choice set).

\section{The optimal strategy}\label{sec:opt-strategy}

In this section we study the optimal strategy for the detection problem with costly observations. 
More precisely, we seek to determine an optimal distribution of observation times $\hat\tau$ and an optimal stopping time 
$\tau$. The optimal strategy is determined in terms of the continuation region
\[
\C:=\{\pi \in [0,1]: V(\pi)< F(\pi)\}.
\]
Note that for $\pi\in \C$ we have
$$
V(\pi)=\inf_{t\ge 0}\mathcal{J}_0 V(\pi,t)
$$
thanks to our main result from the last section. Denote by $t(\pi):=t(\pi,f_\infty)=t(\pi,V)$, and note that since $\mathcal J_0V(\pi,0)=d+V(\pi)$, 
we have $t(\pi)>0$ on $\C$.

Moreover, define $t^*$ by
\[t^*(\pi)= \left\{\begin{array}{ll} t(\pi) &\mbox{for }\pi\in \C\\
\infty& \mbox{for }\pi\notin \C\end{array}\right.\]
Using the function $t^*$, we construct recursively an observation sequence $\hat\tau^*$ 
and a stopping time $\tau^*$ as follows.

Denote by $\tau_0^*=0$ and $\Pi_0=\pi$. For $k= 1,2...$, define recursively
\[\tau^*_{k}:=\tau^*_{k-1}+t^*(\Pi_{\tau^*_{k-1}})\] 
and 
\[\Pi_{\tau^*_k} :=
\frac{j(\tau^*_{k}-\tau^*_{k-1},\Pi_{\tau^*_{k-1}},X_{\tau^*_{k}}-X_{\tau^*_{k-1}})}{1+j(\tau^*_{k}-\tau^*_{k-1},\Pi_{\tau^*_{k-1}},X_{\tau^*_k}-X_{\tau^*_{k-1}})}.\]
Then $\hat\tau^*:=\{\tau_k^*\}_{k=1}^{\infty}\in\mathcal{T}$. 
Moreover, let 
\[n^*:= \min\{k\geq 0: \Pi_{\tau^*_{k}}\notin \C\}= \min\{k\geq 0:\tau^*_{k}=\infty\},\]
and define $\tau^*:=\tau^*_{n^*}$. Then $\tau^*\in\mathcal S^{\hat\tau^*}$, and $n^*$
is the total number of finite observation times in $\hat\tau^*$.

\begin{thm}
The strategy pair $(\hat\tau^*,\tau^*)$ is an optimal strategy.
\end{thm}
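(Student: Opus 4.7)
The plan is to prove the equality
\[
\mathbb{E}_\pi\!\left[F(\Pi^{\hat\tau^*}_{\tau^*})+c\int_0^{\tau^*}\Pi^{\hat\tau^*}_s\,ds+d\sum_{k\geq 1}\mathbbm{1}_{\{\tau^*_k\leq\tau^*\}}\right]=V(\pi);
\]
by Proposition~\ref{prop1.1} (equivalently, the representation \eqref{eq:valuefunc0}), this exhibits $(\hat\tau^*,\tau^*)$ as a minimizer. The driving identity is the fixed-point equation $V=\mathcal{J}V$ read on the continuation region: using $\mathbb{E}_\pi[(t-\Theta)^+]=\mathbb{E}_\pi[\int_0^t\Pi^{\hat\tau^*}_s\,ds]$ together with $t(\pi)=t(\pi,V)$, one has for every $\pi\in\mathcal{C}$
\[
V(\pi)=d+\mathbb{E}_\pi\!\left[c\int_0^{t(\pi)}\Pi^{\hat\tau^*}_s\,ds+V\!\bigl(\Pi^{\hat\tau^*}_{t(\pi)}\bigr)\right],
\]
while $V=F$ on $\mathcal{C}^c$.

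Next I would introduce the discrete process
\[
Y_k:=V\!\bigl(\Pi^{\hat\tau^*}_{\tau^*_k\wedge\tau^*}\bigr)+c\int_0^{\tau^*_k\wedge\tau^*}\Pi^{\hat\tau^*}_s\,ds+d(k\wedge n^*),\qquad k\geq 0,
\]
and show that $\mathbb{E}[Y_k]=V(\pi)$ for every $k$. On $\{n^*\leq k\}$, $Y_{k+1}=Y_k$ trivially. On $\{n^*>k\}$, the state $\Pi^{\hat\tau^*}_{\tau^*_k}$ lies in $\mathcal{C}$, and the strong Markov property of the piecewise deterministic process $(t,\Pi^{\hat\tau^*}_t)$ applied at the $\mathbb{F}^{\hat\tau^*}$-stopping time $\tau^*_k$ (whose post-$\tau^*_k$ increment $t(\Pi^{\hat\tau^*}_{\tau^*_k})$ is $\mathcal{F}^{\hat\tau^*}_{\tau^*_k}$-measurable by Proposition~\ref{measurable}), combined with the displayed fixed-point identity, yields $\mathbb{E}[Y_{k+1}\mid\mathcal{F}^{\hat\tau^*}_{\tau^*_k}]=Y_k$. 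Iterating from $Y_0=V(\pi)$ produces $\mathbb{E}[Y_k]=V(\pi)$ for all $k$.

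From $d(k\wedge n^*)\leq Y_k$ (since $V,F,\Pi^{\hat\tau^*}\geq 0$) and $\mathbb{E}[Y_k]=V(\pi)$, monotone convergence gives $\mathbb{E}[n^*]\leq V(\pi)/d<\infty$; in particular $n^*<\infty$ and $\tau^*<\infty$ almost surely, so that $Y_k$ is eventually constant equal to
\[
Y_\infty:=F(\Pi^{\hat\tau^*}_{\tau^*})+c\int_0^{\tau^*}\Pi^{\hat\tau^*}_s\,ds+dn^*
\]
(using $V=F$ on $\mathcal{C}^c$). Passing to the limit $k\to\infty$ in $\mathbb{E}[Y_k]=V(\pi)$ term-by-term --- bounded convergence for the $V$-term since $0\leq V\leq 1$, and monotone convergence for the remaining two nonnegative increasing terms --- yields $\mathbb{E}[Y_\infty]=V(\pi)$, which is the desired equality.

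The main technical obstacle is the rigorous verification of the one-step identity $\mathbb{E}[Y_{k+1}\mid\mathcal{F}^{\hat\tau^*}_{\tau^*_k}]=Y_k$ on $\{n^*>k\}$: this rests on the measurability of the candidate waiting time $t(\cdot)$ (Proposition~\ref{measurable}), which ensures that $\tau^*_k$ is genuinely an $\mathbb{F}^{\hat\tau^*}$-stopping time with $\mathcal{F}^{\hat\tau^*}_{\tau^*_k}$-measurable next increment, and on an application of the strong Markov property of $(t,\Pi^{\hat\tau^*}_t)$ in the form asserted after \eqref{eq:defn-pi}. The finiteness of $n^*$ and the limit passage are then short measure-theoretic consequences of the positivity of the observation cost $d>0$ and the uniform bound $V\leq 1$.
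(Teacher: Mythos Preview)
Your proof is correct and takes essentially the same route as the paper: both arguments show that the truncated-cost process $Y_k$ (which is exactly the paper's $RHS(n)$, with the running integral $c\int_0^{\tau'_n}\Pi_s\,ds$ written in the evaluated form $c\tau'_n-\frac{c}{\lambda}\sum(1-\Pi_{\tau^*_k})(1-e^{-\lambda(\tau^*_{k+1}-\tau^*_k)})$) has constant expectation $V(\pi)$ via the fixed-point identity and the Markov property, use the $d(k\wedge n^*)$ term to force $n^*<\infty$, and then pass to the limit by bounded and monotone convergence. The only cosmetic difference is that you phrase the key step as a conditional martingale identity rather than merely equality of successive expectations, and you record the slightly sharper conclusion $\mathbb{E}_\pi[n^*]\leq V(\pi)/d$.
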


\begin{proof}
Denote by
\begin{equation*}
V^*(\pi)=
\mathbb E_{\pi}\Big[F(\Pi_{\tau^*})+c\tau^*-\frac{c}{\lambda}\sum_{k=0}^{n^*-1}(1-\Pi_{\tau^*_k})(1-e^{-\lambda(\tau^*_{k+1}-\tau^*_k)})+dn^*\Big],
\end{equation*}
Clearly, by the definition of $V$, we have $V^*(\pi)\ge V(\pi)$. It thus remains to show $V\ge V^*(\pi)$. 

For $n\geq 0$, let $\tau_n^\prime := \tau^*_n\wedge\tau^*=\tau^*_{n\wedge n^*}$.

\noindent
{\bf Claim:} We have 
\begin{eqnarray}
\label{claim}
\notag
V(\pi) &=& \mathbb E_{\pi}\left[V(\Pi_{\tau^\prime_n})+c\tau^\prime_{n}-\frac{c}{\lambda}\sum_{k=0}^{n\wedge n^*-1}(1-\Pi_{\tau^*_k})(1-e^{-\lambda(\tau^*_{k+1}-\tau^*_k)})\right]\\
&&
+\E_\pi\left[d(n\wedge n^*)\right]\\
\notag
&=:& RHS(n) 
\end{eqnarray}
for all $n\geq 0$.

To prove the claim, first note that $\tau_0^\prime=0$, so $V(\pi)=RHS(0)$. 
Furthermore, by the Markov property we have
\begin{eqnarray*}
&& \hspace{-15mm} RHS(n+1) -RHS(n) \\
&=&
\mathbb E_{\pi}\left[\left(V(\Pi_{\tau^*_{n+1}})-V(\Pi_{\tau^*_n}) +c(\tau^*_{n+1}-\tau^*_n)\right.\right.\\
&&\hspace{10mm} -\left.\left.
\frac{c}{\lambda}(1-\Pi_{\tau^*_n})(1-e^{-\lambda(\tau^*_{n+1}-\tau^*_n)})+d\right)\mathbbm{1}_{\{n^*\ge n+1\}}\right]\\
&=& \mathbb E_{\pi}\left[\left(\mathbb E_{\Pi_{\tau_n^*}}\left[V(\Pi_{\tau_1^*}) + c\tau_1^* \right]- V(\Pi_{\tau^*_n})\right.\right.\\
&&\hspace{10mm} - \left.\left. \frac{c}{\lambda}(1-\Pi_{\tau_n^*}) \mathbb E_{\Pi_{\tau_n^*}}\left[ 1-e^{-\lambda \tau_1^*} \right] 
+d  \right)\mathbbm{1}_{\{n^*> n\}}\right]
\\
&=& 0,
\end{eqnarray*}
which shows that \eqref{claim} holds for all $n\geq 0$.

Note that it follows from \eqref{claim} that $n^*<\infty$ a.s. (since otherwise the term $\E_\pi[d(n\wedge n^*)]$ would explode as $n\to\infty$). Therefore, letting $n\to\infty$ in \eqref{claim}, using bounded convergence and monotone convergence, we find that
\begin{eqnarray*}
V(\pi) &=& \mathbb E_{\pi}\left[V(\Pi_{\tau^*})+c\tau^*-\frac{c}{\lambda}\sum_{k=0}^{ n^*-1}(1-\Pi_{\tau^*_k})(1-e^{-\lambda(\tau^*_{k+1}-\tau^*_k)}) + d n^*\right]\\
&=& \mathbb E_{\pi}\left[F(\Pi_{\tau^*})+c\tau^*-\frac{c}{\lambda}\sum_{k=0}^{ n^*-1}(1-\Pi_{\tau^*_k})(1-e^{-\lambda(\tau^*_{k+1}-\tau^*_k)}) + d n^*\right]\\
&=& V^*(\pi),
\end{eqnarray*}
which completes the proof.
\end{proof}

Our approach, which relies on dynamic programming principle, should be contrasted with the verification approach used in \cite{MR2374974, MR2256030, MR2482527} in which one first finds a smooth enough solution to a free boundary problem and uses It\^{o}'s formula to verify that this solution is the value function. Another useful outcome of our approach due to its iterative nature is its usefulness for a numerical approximation.

\section{Numerical Examples}

In Figure \ref{41new}, we illustrate Proposition~\ref{prop:decr-con}. We use the same parameters that were used for 
Figure~2 in \cite{EBRK}, where $d=0$ .

Clearly, the value functions $V_n$ increase in the cost parameters. Figure~\ref{42new} displays the value functions $V_1,...,V_{10}$ for the same parameters as in Figure~\ref{41new} but for a larger cost $c$. Similarly,
the sensitivity with respect to the observation cost parameter $d$ is pictured in Figure~\ref{43new}. 
x
In Figure~\ref{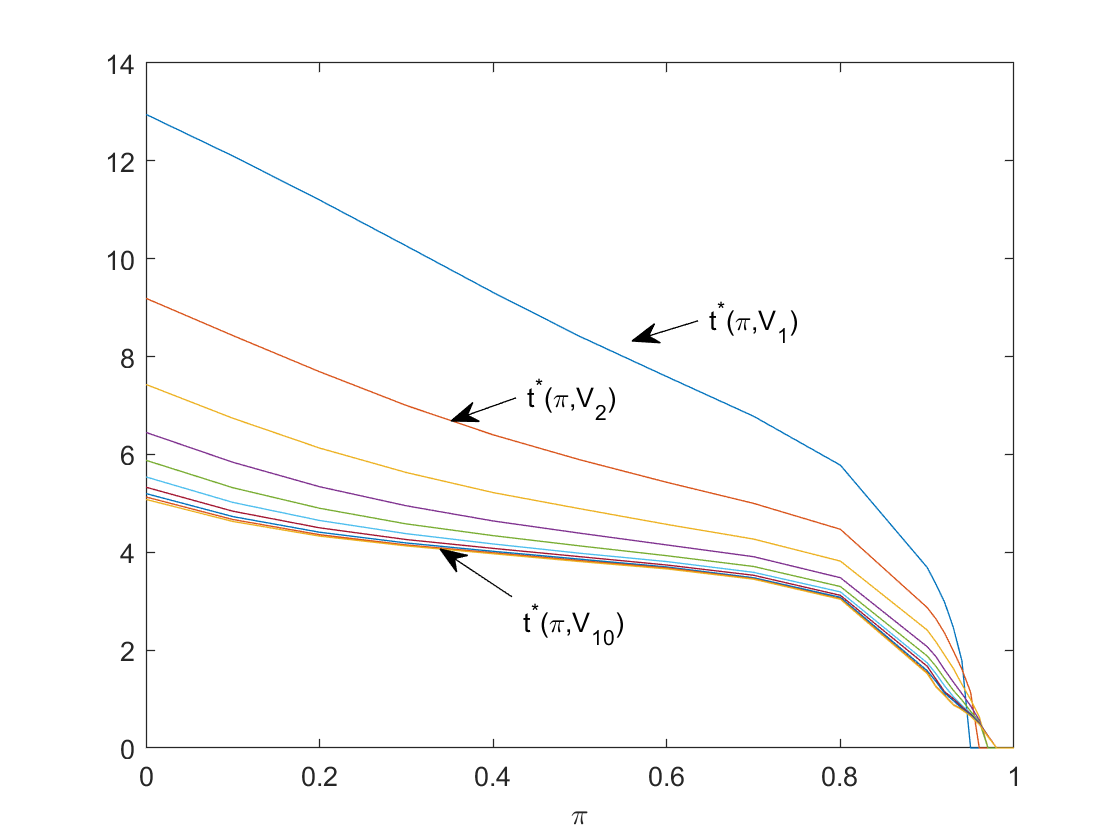} we compute the function $t$ defined in \eqref{t}, when $f$ in the definition is replaced by $V_n$, for various values of $n$. 
While it appears that $t(\pi,V_n)$ is decreasing in $n$ (the more observation rights one has, the more inclined one is
to make early observations) and decreasing in $\pi$, we have not been able to prove these monotonicities.

Finally, in Figure~\ref{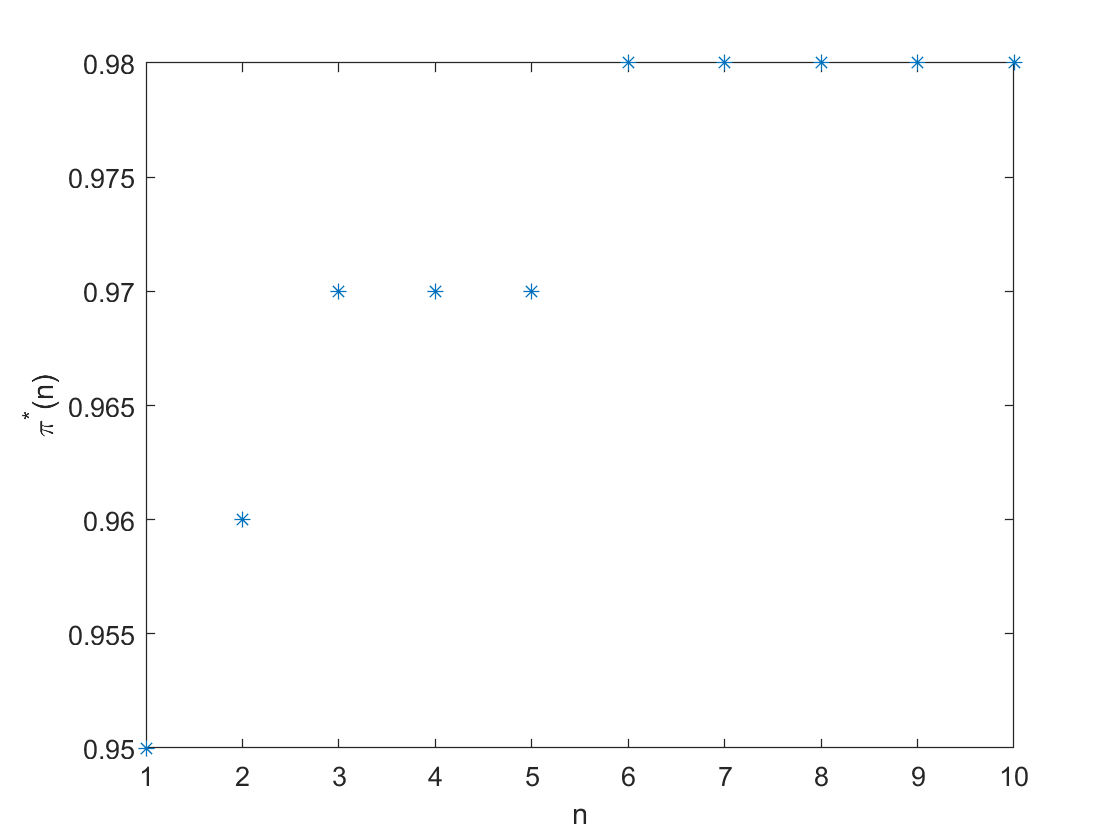} we determine $\pi^*(n)=\inf\{\pi: t^*(\pi,V_n)=\infty\}$. Our observations consistently indicate that the continuation region for taking observations is an interval of the form $[0, \pi^*(n))$; also here, an analytical proof of this remains to be found.

\begin{figure}[H]
\begin{center}
\includegraphics[scale=.5]{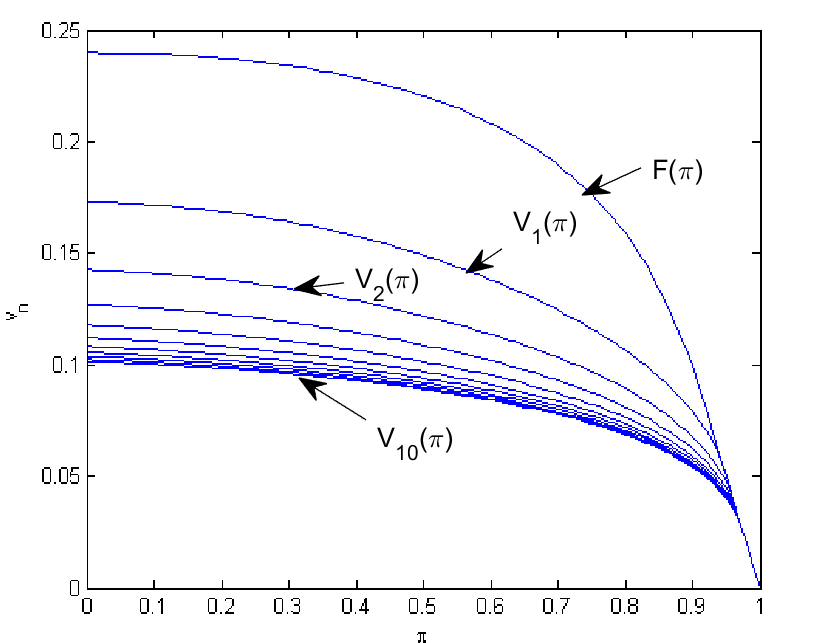}
\end{center}
\caption{$c=0.01,\lambda=0.1,\alpha=1,d=0.001,n=0,1,\cdots,10$.}\label{41new}
\end{figure}
\begin{figure}[H]
\begin{center}
\includegraphics[scale=.5]{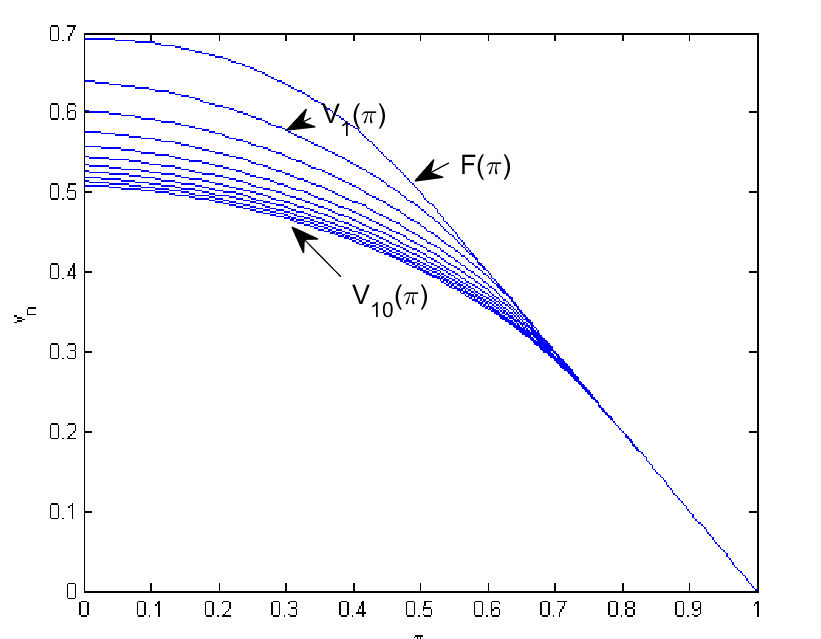}
\end{center}
\caption{$c=0.1,\lambda=0.1,\alpha=1,d=0.001,n=0,1,\cdots,10$.}\label{42new}
\end{figure}
\begin{figure}[H]
\begin{center}
\includegraphics[scale=.7]{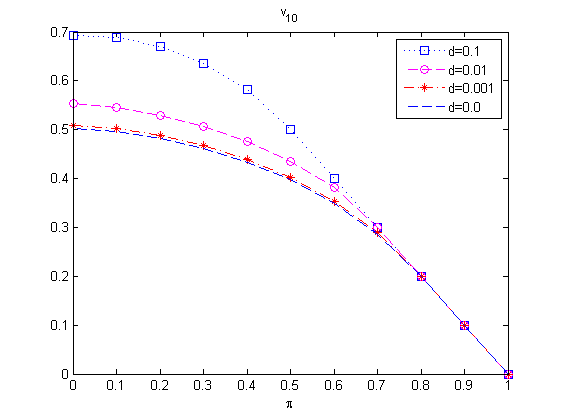}
\end{center}
\caption{$c=0.1,\lambda=0.1,\alpha=1$.}\label{43new}
\end{figure}

\begin{figure}[H]
\begin{center}
\includegraphics[scale=.7]{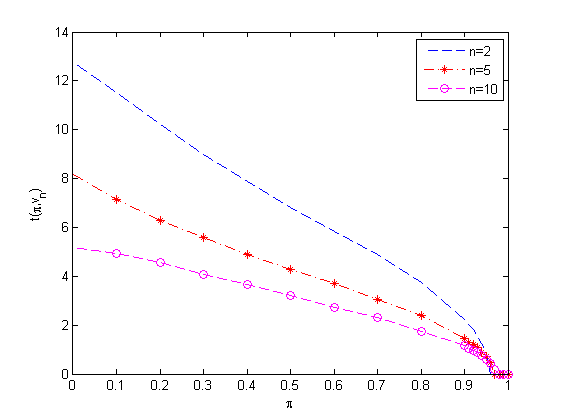}
\end{center}
\caption{$c=0.01,\lambda=0.1,\alpha=1,d=0.001$.}\label{44new.png}
\end{figure}
\begin{figure}[H]
\begin{center}
\includegraphics[scale=.4]{}
\end{center}
\caption{$c=0.01,\lambda=0.1,\alpha=1,d=0.001$.}\label{44new1.png}
\end{figure}

\bibliography{ref}{}

\begin{thebibliography}{10}

\bibitem{B}
{\sc D.~W. Balmer}, {\em On a quickest detection problem with costly
  information}, J. Appl. Probability, 12 (1975), pp.~87--97.

\bibitem{MR2158013}
{\sc E.~Bayraktar, S.~Dayanik, and I.~Karatzas}, {\em The standard {P}oisson
  disorder problem revisited}, Stochastic Process. Appl., 115 (2005),
  pp.~1437--1450.

\bibitem{MR2260062}
\leavevmode\vrule height 2pt depth -1.6pt width 23pt, {\em Adaptive {P}oisson
  disorder problem}, Ann. Appl. Probab., 16 (2006), pp.~1190--1261.

\bibitem{EBRK}
{\sc E.~Bayraktar and R.~Kravitz}, {\em Quickest detection with discretely
  controlled observations}, Sequential Anal., 34 (2015), pp.~77--133.

\bibitem{MR2806570}
{\sc B.~Bouchard and N.~Touzi}, {\em Weak dynamic programming principle for
  viscosity solutions}, SIAM J. Control Optim., 49 (2011), pp.~948--962.

\bibitem{MR2767184}
{\sc E.~\c{C}\i nlar}, {\em Probability and stochastics}, vol.~261 of Graduate
  Texts in Mathematics, Springer, New York, 2011.

\bibitem{DS}
{\sc R.~C. Dalang and A.~N. Shiryaev}, {\em A quickest detection problem with
  an observation cost}, Ann. Appl. Probab., 25 (2015), pp.~1475--1512.

\bibitem{MR1283589}
{\sc M.~H.~A. Davis}, {\em Markov models and optimization}, vol.~49 of
  Monographs on Statistics and Applied Probability, Chapman \& Hall, London,
  1993.

\bibitem{D}
{\sc S.~Dayanik}, {\em Wiener disorder problem with observations at fixed
  discrete time epochs}, Math. Oper. Res., 35 (2010), pp.~756--785.

\bibitem{DE}
{\sc H.~Dyrssen and E.~Ekstr\"{o}m}, {\em Sequential testing of a {W}iener
  process with costly observations}, Sequential Anal., 37 (2018), pp.~47--58.

\bibitem{MR3077545}
{\sc P.~V. Gapeev and A.~N. Shiryaev}, {\em Bayesian quickest detection
  problems for some diffusion processes}, Adv. in Appl. Probab., 45 (2013),
  pp.~164--185.

\bibitem{MR2256030}
{\sc G.~Peskir and A.~Shiryaev}, {\em Optimal stopping and free-boundary
  problems}, Lectures in Mathematics ETH Z\"{u}rich, Birkh\"{a}user Verlag,
  Basel, 2006.

\bibitem{MR2482527}
{\sc H.~V. Poor and O.~Hadjiliadis}, {\em Quickest detection}, Cambridge
  University Press, Cambridge, 2009.

\bibitem{S}
{\sc A.~N. Shiryaev}, {\em Two problems of sequential analysis}, Cybernetics, 3
  (1967), pp.~63--69 (1969).

\bibitem{MR2065992}
{\sc A.~N. Shiryaev}, {\em A remark on the quickest detection problems},
  Statist. Decisions, 22 (2004), pp.~79--82.

\bibitem{MR2374974}
{\sc A.~N. Shiryaev}, {\em Optimal stopping rules}, vol.~8 of Stochastic
  Modelling and Applied Probability, Springer-Verlag, Berlin, 2008.
\newblock Translated from the 1976 Russian second edition by A. B. Aries,
  Reprint of the 1978 translation.

\end{thebibliography}
\bibliographystyle{siam}
\end{document}